\documentclass[11pt]{article}

\usepackage{amsmath, amssymb, amsthm}
\usepackage{fullpage}
\usepackage{graphicx}
\usepackage{hyperref}
\usepackage{mathrsfs}
\usepackage{float}
\usepackage[ruled,vlined]{algorithm2e}
\usepackage{placeins}
\usepackage{color}
\usepackage{xcolor}
\newtheorem{proposition}{Proposition}

\newtheorem{theorem}{Theorem}[section]
\newtheorem{lemma}[theorem]{Lemma}
\newtheorem{corollary}[theorem]{Corollary}
\newtheorem{remark}[theorem]{Remark}

\date{}

\title{\bf Nearest Graph Laplacians with Prescribed Connected Components: A Convex Framework for Network Reconstruction}

\begin{document}

	\author{  
		Udit Raj\thanks{Department of Electrical Engineering and Computer Science, Indian Institute of Science Education and Research, Bhopal, India. Email: \texttt{ur376@snu.edu.in; udit2026@iiserb.ac.in}} 
		\and
		Sudeepto Bhattacharya\thanks{Department of Mathematics, Shiv Nadar Institution of Eminence, Delhi-NCR Email: \texttt{sudeepto.bhattacharya@snu.edu.in}}
        \and
        Prince Kanhya\thanks{Corresponding Author: Department of Mathematics, Indian Institute of Technology Guwahati, Assam, India. Email: \texttt{pkanhya77@gmail.com}}
        }
	\date{\today}
	\maketitle
\begin{abstract}
	We study the problem of constructing the nearest graph Laplacian matrix to a given Laplacian while enforcing a prescribed connected-component structure. Let the vertex set be partitioned into nonempty disjoint blocks $C_1,\ldots,C_k$, and let $U=[u_1,\ldots,u_k]$ be the matrix of the corresponding block-indicator vectors. The constraint $MU=0$ ensures that these prescribed indicators lie in the nullspace of the optimized Laplacian $M^\star$, and hence the associated graph has at least $k$ connected components. To guarantee exactly the prescribed components, we impose additional block-connectivity constraints on the principal blocks $M_j=M[C_j,C_j]$. These constraints ensure that each prescribed block induces a connected weighted subgraph. The resulting problem is a convex semidefinite optimization problem with a strictly convex Frobenius-norm objective. We prove existence and uniqueness of the minimizer and show that the optimized Laplacian has exactly the prescribed connected components, with nullspace $\operatorname{span}\{u_1,\ldots,u_k\}$. The framework proposed in this work provides a principled tool for quantifying the minimum structural intervention required to transform a graph-based network into one having a prescribed group-separated structure. Numerical examples, including the Sampson monastery positive-affection network, illustrate the nearest faction-consistent weighted reconstruction and the minimum Laplacian perturbation required to realize the prescribed faction structure.
\end{abstract}

\vspace{0.5cm}

\textbf{Keywords}: Convex Optimization; Graphs; Laplacian Matrix; Algebraic connectivity; Network Reconstruction; Prescribed Partition.

\vspace{0.5cm}
\textbf{MSC}: Primary 05C50; Secondary 05C40, 15A18, 90C22, 90C25.

\newpage

	\section{Introduction}
	
Social networks frequently exhibit group or community structure, in which 
individuals are more strongly connected within groups than across groups 
\cite{ArefEtAl2020, GirvanNewman2002}. In empirical applications, however, the grouping 
suggested by observed interactions may not coincide exactly with externally 
known social, institutional, or demographic classifications 
\cite{TraudKelsicMuchaPorter2011}. Moreover, observed network data may contain 
measurement errors, missing relationships, uncertain edges, or noise, making 
the recorded network only an imperfect representation of the underlying social 
organization \cite{MartinBallNewman2016,Newman2018}. These considerations 
motivate the problem of reconstructing a network that is consistent with a 
prescribed group structure while retaining as much of the observed interaction 
pattern as possible.

    
Graph Laplacian matrices are fundamental objects in spectral graph theory, network analysis, consensus dynamics, and graph-based data science \cite{Olfati-Saber}. In network control and opinion dynamics, Laplacian-based models play a central role in consensus, prescribed
group consensus, and opinion-clustering problems \cite{Spielman2010, TT}. For an undirected weighted graph, the nullspace of its Laplacian matrix carries direct information about the connected-component structure of the graph. In particular, the dimension of the nullspace of a graph Laplacian is equal to the number of connected components of the associated graph. Thus, controlling the nullspace of a Laplacian provides an algebraic way to control the component structure of a graph.

Several related classes of graph-learning and matrix-nearness
problems have been studied in the literature. Sato considered the
construction of a nearest graph Laplacian to a given matrix under a
known network structure, while Sato and Suzuki subsequently developed
specialized algorithms for the nearest graph-Laplacian problem in the
Frobenius norm \cite{Sato2018,SatoSuzuki2024}. 
More broadly, structured
matrix-nearness problems under prescribed spectral and structural
constraints have also been investigated \cite{PKUD}. For general background on matrix perturbation theory and the
sensitivity of spectral information under matrix modifications, we
refer to \cite{Chung1997}. 
Graph-learning methods
have also been proposed for obtaining graphs with a prescribed number
of connected components. In particular, the constrained Laplacian-rank
approach learns a similarity graph with exactly $k$ connected
components by imposing a rank condition on its Laplacian
\cite{NieEtAl2016}. Related approaches infer graphs with monotone
topological properties and multiple connected components
\cite{PavezEgilmezOrtega2018}, while graph-Laplacian estimation under
connectivity and sparsity constraints has been investigated from a
statistical graph-learning perspective
\cite{EgilmezPavezOrtega2017}.

In this work, the problem addressed, unlike the above approaches, does not infer an unknown graph topology or prescribe only the number of connected components. Instead, both the original Laplacian $L$ and the exact vertex partitions are given well in advance. In this work, we study the following nearest-Laplacian problem. Given a graph Laplacian matrix $L$ and a prescribed partition of the vertex set into nonempty pairwise disjoint blocks
$C_1,\ldots,C_k$, that is,
	\[
	V=C_1\cup C_2\cup\cdots\cup C_k,
	\qquad
	C_i\cap C_j=\emptyset \quad (i\ne j),
	\]
	we seek a graph Laplacian matrix that is nearest to $L$ in the Frobenius norm and whose associated graph has exactly the connected components $C_1,\ldots,C_k$. In other words, the goal is to minimally modify the original Laplacian while ensuring that the optimized graph has the desired component structure.
    
 {A prescribed component structure can, of course, be obtained by manually
	removing selected inter-component edges. However, such a construction is not
	unique and may unnecessarily distort the original graph. The purpose of the
	present work is to construct the desired component structure while changing
	the original Laplacian as little as possible. Thus, the proposed method should
	be viewed as a minimal-perturbation correction of the original graph rather
	than an arbitrary redesign.}
    
	The prescribed components are encoded using their indicator vectors. For each block $C_j$, let $u_j$ denote the indicator vector of $C_j$, and define
	$$
	U=[u_1,u_2,\ldots,u_k].
	$$
	The nullspace constraint
	$$
	MU=0
	$$
	ensures that all prescribed block-indicator vectors belong to the nullspace of the optimized Laplacian. Since the vectors $u_1,\ldots,u_k$ are linearly independent, this implies
	$$
	\dim\ker(M)\ge k.
	$$
	Hence, the graph associated with $M$ has at least $k$ connected components \cite{Mohar1991}.
	
	However, the constraint $MU=0$ alone does not guarantee that the optimized graph has exactly $k$ connected components. A prescribed block $C_j$ may itself split into two or more disconnected components, creating additional nullspace directions. Therefore, to guarantee exactly the prescribed components, it is necessary to prevent extra zero directions inside each block.
	
	We achieve this by imposing block-connectivity constraints. Let
	$$
	M_j=M[C_j,C_j]
	$$
	be the principal block of $M$ corresponding to the prescribed block $C_j$. For each non-singleton block, we require $M_j$ to be positive definite on the subspace orthogonal to the all-ones vector. Equivalently, we impose a positive lower bound on the algebraic connectivity of each prescribed block. This ensures that every prescribed block induces a connected weighted subgraph.
	
	The resulting problem is a convex semidefinite optimization problem with a strictly convex Frobenius-norm objective. The objective
	$$
	\|M-L\|_F^2
	$$
	keeps the optimized Laplacian close to the original one, while the nullspace and block-connectivity constraints enforce the desired graph structure. Since the feasible set is nonempty, closed, and convex, and the objective is strictly convex, the problem admits a unique global minimizer.
	
	This formulation is useful in settings where one wants to preserve a given network as much as possible while enforcing a desired component or cluster structure. For example, in continuous-time consensus dynamics
	$$
	\dot{x}(t)=-Mx(t),
	$$
	the nullspace of $M$ determines the limiting consensus behavior. If $M$ has a one-dimensional nullspace spanned by the all-ones vector, then all agents reach global consensus. If $\dim\ker(M)=k$, then the system reaches cluster consensus over $k$ connected components. Thus, the proposed nearest-Laplacian construction provides a minimal-perturbation approach for designing networks with prescribed cluster-consensus structure.
   
    The proposed nearest-Laplacian framework provides such a reconstruction by computing the unique weighted Laplacian with exactly the prescribed components that minimizes the Frobenius-norm perturbation from the original Laplacian. The resulting distance quantifies the minimum Laplacian intervention required to make the observed network consistent with the prescribed social organization, while the algebraic connectivity of each optimized block measures the internal connectivity of the reconstructed groups. We would also like to put a caveat about the proposed reconstruction imposes complete separation between prescribed groups and should therefore be interpreted as a minimum-perturbation reference network consistent with exact faction separation, rather than as a claim that empirical social communities generally contain no inter-group relationships.
	
	The main contributions of this paper are as follows:
	\begin{itemize}
		\item We formulate a nearest-Laplacian problem for constructing a graph Laplacian with prescribed connected components.
		\item We show that the constraint $MU=0$ guarantees at least $k$ connected components, but not necessarily exactly $k$ components.
		\item We introduce block-connectivity semidefinite constraints that guarantee exactly the prescribed components.
		\item We prove that the resulting convex semidefinite optimization problem has a unique minimizer.
		\item We provide a blockwise semidefinite implementation, numerical verification procedures, and applications to synthetic and real social-network data.
	\end{itemize}
	
\section{Notation}

Let $G=(V,E,W)$ be a weighted undirected graph with vertex set
\[
V=\{1,2,\ldots,n\}.
\]
The matrix $W=[w_{ij}]$ denotes the symmetric nonnegative weight matrix, where $w_{ij}\ge 0$ for $i\ne j$ and $w_{ii}=0$. The graph Laplacian associated with $W$ is
\[
L=D-W,
\]
where
\[
D=\operatorname{diag}(d_1,\ldots,d_n),
\qquad
d_i=\sum_{j=1}^{n}w_{ij}.
\]

\begin{remark}
	If the original graph is unweighted, then its adjacency matrix $A$ may be viewed as a special case of a weighted adjacency matrix $W$, with entries in $\{0,1\}$. The input Laplacian is then
	\[
	L=D_A-A,
	\]
	where $D_A$ is the diagonal degree matrix associated with $A$. Since the optimization variables are continuous, the optimized Laplacian $M^\star$ generally corresponds to a weighted graph with weight matrix $W^\star$, where
	\[
	W^\star_{ij}=-M^\star_{ij},\qquad i\ne j,
	\]
	and
	\[
	W^\star_{ii}=0.
	\]
\end{remark}
Throughout the paper, $L\in\mathbb R^{n\times n}$ denotes the given original graph Laplacian, $M\in\mathbb{R}^{n\times n}$ denotes a candidate
Laplacian and the optimization variable  and $M^\star\in\mathbb R^{n\times n}$ denotes the optimized graph Laplacian.

In later sections, $\succeq$ denotes the Loewner partial order on symmetric
matrices; namely, for symmetric matrices $A$ and $B$,
\[
A\succeq B
\]
means that $A-B$ is positive semidefinite
\cite{HornJohnson2013}.

Let the prescribed partition of the vertex set be
\[
V=C_1\cup C_2\cup\cdots\cup C_k,
\qquad
C_i\cap C_j=\emptyset \quad (i\ne j),
\]
where each $C_j$ is nonempty. For each block $C_j$, define its indicator vector $u_j\in\mathbb R^n$ by
\[
(u_j)_i=
\begin{cases}
	1, & i\in C_j,\\
	0, & i\notin C_j.
\end{cases}
\]
The block-indicator matrix is defined by
\[
U=[u_1,u_2,\ldots,u_k]\in\mathbb R^{n\times k}.
\]
Since $C_1,\ldots,C_k$ form a partition of $V$, we have
\[
\mathbf 1_n=u_1+u_2+\cdots+u_k.
\]

For each $j=1,\ldots,k$, let
\[
n_j=|C_j|
\]
and let
\[
M_j=M[C_j,C_j]\in\mathbb R^{n_j\times n_j}
\]
denote the principal submatrix of $M$ corresponding to the vertices in $C_j$.

Define the orthogonal projector onto the subspace orthogonal to $\mathbf 1_{n_j}$ by
\[
P_j
=
I_{n_j}
-
\frac{1}{n_j}\mathbf 1_{n_j}\mathbf 1_{n_j}^T.
\]
Thus,
\[
P_j\mathbf 1_{n_j}=0,
\]
and
\[
P_jx=x
\quad\text{whenever}\quad
x\perp \mathbf 1_{n_j}.
\]

For numerical implementation, we also use a matrix
\[
Q_j\in\mathbb R^{n_j\times(n_j-1)}
\]
whose columns form an orthonormal basis of the subspace
\[
\mathbf 1_{n_j}^{\perp}
=
\{x\in\mathbb R^{n_j}:x^T\mathbf 1_{n_j}=0\}.
\]
Thus,
\[
Q_j^T\mathbf 1_{n_j}=0,
\qquad
Q_j^TQ_j=I_{n_j-1}.
\]

We write $\ker(M)$ for the nullspace of $M$. The eigenvalues of a symmetric matrix $M$ are ordered as
\[
\lambda_1(M)\le \lambda_2(M)\le\cdots\le \lambda_n(M).
\]
	
	\section{Preliminaries}
	
	We first recall a standard property of weighted graph Laplacians. This result connects the nullspace of a Laplacian matrix with the connected components of the associated graph \cite{Chung1997, Biggs1993, Bapat2010, Merris1994}.
	
	\begin{lemma}[Nullspace of a weighted graph Laplacian]
		Let $H\in\mathbb R^{m\times m}$ satisfy
		$$
		H=H^T,\qquad H\mathbf 1_m=0,\qquad H_{pq}\le 0
		\quad (p\ne q).
		$$
		Define the edge weights
		$$
		w_{pq}=-H_{pq}\ge0,\qquad p\ne q.
		$$
		Then, for every $x\in\mathbb R^m$,
		$$
		x^THx
		=
		\frac12\sum_{p,q=1}^{m}w_{pq}(x_p-x_q)^2.
		$$
		Consequently, $H\succeq0$, and
		$$
		x\in\ker(H)
		$$
		if and only if $x$ is constant on every connected component of the associated weighted graph. Therefore,
		$$
		\dim\ker(H)
		=
		\text{number of connected components of the associated graph}.
		$$
		In particular, the associated graph is connected if and only if
		$$
		\ker(H)=\operatorname{span}\{\mathbf 1_m\},
		$$
		or equivalently,
		$$
		\lambda_2(H)>0.
		$$
	\end{lemma}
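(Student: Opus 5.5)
The plan is to establish the quadratic-form identity first and then read off every remaining claim from it. Because $H\mathbf 1_m=0$, each diagonal entry satisfies $H_{pp}=-\sum_{q\ne p}H_{pq}=\sum_{q\ne p}w_{pq}$, so $H=D_w-W$ with $W=[w_{pq}]$ (setting $w_{pp}=0$) and $D_w$ the diagonal matrix of weighted degrees. Expanding $x^THx=\sum_p H_{pp}x_p^2+\sum_{p\ne q}H_{pq}x_px_q$ gives $\sum_{p\ne q}w_{pq}x_p^2-\sum_{p\ne q}w_{pq}x_px_q$. Symmetrizing $\sum_{p\ne q}w_{pq}x_p^2=\frac12\sum_{p\ne q}w_{pq}(x_p^2+x_q^2)$ with $w_{pq}=w_{qp}$ then yields $\frac12\sum_{p,q}w_{pq}(x_p-x_q)^2$. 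The diagonal terms contribute nothing, since $w_{pp}=0$ and in any case $x_p-x_p=0$.

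Positive semidefiniteness follows immediately, because every term has $w_{pq}\ge0$. For the kernel, note that $H$ is symmetric positive semidefinite, so $Hx=0$ if and only if $x^THx=0$. The nontrivial direction holds because $H^{1/2}x=0$ gives $Hx=0$. By the identity, $x^THx=0$ if and only if $x_p=x_q$ whenever $w_{pq}>0$, that is, $x$ takes equal values at the ends of every edge. Propagating this equality along paths shows that $x$ is constant on each connected component. Conversely, a vector constant on components makes every term vanish, because edges only join vertices in the same component.

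For the dimension count, let the components be $S_1,\dots,S_r$. The kernel is then exactly $\operatorname{span}\{\mathbf 1_{S_1},\dots,\mathbf 1_{S_r}\}$, and these indicator vectors have disjoint supports, so they are linearly independent. Hence $\dim\ker(H)=r$. Taking $r=1$ gives $\ker(H)=\operatorname{span}\{\mathbf 1_m\}$ exactly when the graph is connected. Since $H\succeq0$ and $\mathbf 1_m\in\ker(H)$, we always have $\lambda_1(H)=0$. The multiplicity of the eigenvalue $0$ equals $\dim\ker(H)$, so $\lambda_2(H)>0$ if and only if that dimension is $1$.

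No step is a genuine obstacle. The one place needing care is the equivalence between $x^THx=0$ and $Hx=0$, which uses positive semidefiniteness. The path-propagation argument should also be made precise, for example by induction on path length within a component.
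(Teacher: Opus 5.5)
Your proposal is correct and follows the paper's proof essentially step for step: use $H\mathbf 1_m=0$ to get $H_{pp}=\sum_{q\ne p}w_{pq}$, expand $x^THx$ into $\frac12\sum_{p,q}w_{pq}(x_p-x_q)^2$, and then read off $H\succeq0$, the description of the kernel as vectors constant on components, its dimension, and the connectivity criterion. The paper simply asserts that vectors constant on components satisfy $Hx=0$ and that the final equivalences are immediate; your $H^{1/2}$ argument, the disjoint-support independence of the component indicators, and the zero-eigenvalue multiplicity count supply those steps explicitly.
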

	
	\begin{proof}
		Since $H\mathbf 1_m=0$, we have
		$$
		H_{pp}
		=
		-\sum_{q\ne p}H_{pq}
		=
		\sum_{q\ne p}w_{pq}.
		$$
		Thus,
		$$
		\begin{aligned}
			x^THx
			&=
			\sum_{p=1}^{m}H_{pp}x_p^2
			+
			\sum_{p\ne q}H_{pq}x_px_q\\
			&=
			\sum_{p=1}^{m}
			\left(\sum_{q\ne p}w_{pq}\right)x_p^2
			-
			\sum_{p\ne q}w_{pq}x_px_q\\
			&=
			\frac12\sum_{p,q=1}^{m}
			w_{pq}(x_p-x_q)^2.
		\end{aligned}
		$$
		Hence $x^THx\ge0$ for every $x\in\mathbb R^m$, and therefore $H\succeq0$.
		
		Moreover,
		$$
		x^THx=0
		$$
		if and only if
		$$
		x_p=x_q
		$$
		for every edge of positive weight. Therefore, $x$ must be constant on each connected component of the associated graph. Conversely, any vector that is constant on each connected component satisfies $Hx=0$.
		
		Hence the nullspace of $H$ is spanned by the indicator vectors of the connected components, and its dimension equals the number of connected components. The final equivalences follow immediately.
	\end{proof}
	
	\begin{remark}
		The preceding lemma shows that, for a graph Laplacian, controlling the nullspace is equivalent to controlling the connected-component structure. However, to obtain exactly $k$ connected components, it is not enough to place $k$ independent prescribed vectors in the nullspace; one must also ensure that no additional independent nullspace directions are created.
	\end{remark}

	\section{Problem Formulation}
	
	Let $L\in\mathbb R^{n\times n}$ be the given graph Laplacian matrix, and let
	\[
	V=C_1\cup C_2\cup\cdots\cup C_k,
	\qquad
	C_i\cap C_j=\emptyset\quad (i\ne j),
	\]
	be the prescribed partition of the vertex set into nonempty blocks. For each block
	$C_j$, let $u_j$ be its indicator vector, and define
	\[
	U=[u_1,u_2,\ldots,u_k]\in\mathbb R^{n\times k}.
	\]
	
	{We want to construct a graph Laplacian matrix $M$ that is closest to $L$ in the
	Frobenius norm and whose associated graph has exactly the prescribed connected
	components $C_1,\ldots,C_k$.} The minimal-perturbation objective is
	\[
	\|M-L\|_F^2.
	\]
	This objective ensures that, among all Laplacian matrices satisfying the desired
	component constraints, we choose the one nearest to the original Laplacian and unique closest such matrix is denoted by $M^\star$.
	
	A natural first requirement is that the prescribed block-indicator vectors lie in
	the nullspace of $M$. This is imposed by the linear constraint
	\[
	MU=0.
	\]
	Since the blocks form a partition of the vertex set, we have
	\[
	\mathbf 1_n=u_1+u_2+\cdots+u_k.
	\]
	Hence $MU=0$ automatically implies
	\[
	M\mathbf 1_n=0.
	\]
	Thus, the row-sum condition of a graph Laplacian is already included in the
	constraint $MU=0$.
	
	Together with symmetry and nonpositive off-diagonal entries, the condition
	$MU=0$ gives a weighted graph Laplacian whose nullspace contains the prescribed
	block-indicator vectors. However, this condition alone only implies
	\[
	\operatorname{span}\{u_1,\ldots,u_k\}\subseteq\ker(M),
	\]
	and hence
	\[
	\dim\ker(M)\ge k.
	\]
	Therefore, the associated graph has at least $k$ connected components. It may
	have more than $k$ components if some prescribed block $C_j$ splits into two or
	more disconnected components.
	
	To prevent such additional splitting, we impose a block-connectivity constraint.
	For each $j=1,\ldots,k$, let
	\[
	n_j=|C_j|,
	\qquad
	M_j=M[C_j,C_j]\in\mathbb R^{n_j\times n_j},
	\]
	and define the orthogonal projector
	\[
	P_j
	=
	I_{n_j}
	-
	\frac{1}{n_j}\mathbf 1_{n_j}\mathbf 1_{n_j}^T.
	\]
	The matrix $P_j$ is the orthogonal projector onto the subspace
	\[
	\mathbf 1_{n_j}^{\perp}
	=
	\{x\in\mathbb R^{n_j}:x^T\mathbf 1_{n_j}=0\}.
	\]
	
	For a fixed parameter $\varepsilon>0$, we consider the strengthened nearest
	Laplacian problem
	\[
	\begin{aligned}
		\min_{M\in\mathbb R^{n\times n}}\quad
		&\|M-L\|_F^2\\
		\text{subject to}\quad
		&MU=0,\\
		&M=M^T,\\
		&M_{ij}\le0,\qquad i\ne j,\\
		&M_j\succeq \varepsilon P_j,\qquad j=1,\ldots,k.
	\end{aligned}
	\tag{P}
	\]
	The constraint
	\[
	M_j\succeq\varepsilon P_j
	\] 
	means that
	\[
	x^TM_jx\ge \varepsilon x^TP_jx
	\qquad
	\text{for every }x\in\mathbb R^{n_j}.
	\]
    
	In particular, if $x\perp\mathbf 1_{n_j}$, then $P_jx=x$, and therefore
	\[
	x^TM_jx\ge \varepsilon\|x\|_2^2.
	\]
	Thus, for every non-singleton block $C_j$, the block $M_j$ is positive definite
	on $\mathbf 1_{n_j}^{\perp}$, and hence
	\[
	\lambda_2(M_j)\ge\varepsilon.
	\]
	This guarantees that each prescribed block induces a connected weighted
	subgraph. Singleton blocks are treated as connected by convention.
	
	The problem $(P)$ is a convex semidefinite optimization problem {\cite{BoydVandenberghe2004}}. The objective is
	a strictly convex quadratic function of $M$. The constraints $MU=0$ and $M=M^T$
	are linear equality constraints and hence affine, the inequalities $M_{ij}\le0$ are linear, and the constraints
	\[
	M_j-\varepsilon P_j\succeq0
	\]
	are linear matrix inequalities.
	
	\begin{proposition}[Feasibility of the constraint set]
		The feasible set of problem $(P)$ is nonempty.
	\end{proposition}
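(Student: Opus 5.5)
We exhibit an explicit feasible point: the block-diagonal matrix whose diagonal blocks are scaled complete-graph Laplacians, one for each prescribed block. For each $j$ with $n_j\ge 2$ we set
\[
K_j=n_j I_{n_j}-\mathbf 1_{n_j}\mathbf 1_{n_j}^T = n_j P_j,
\]
which is the Laplacian of the complete graph on $C_j$. For a singleton block we set $K_j=0\in\mathbb R^{1\times 1}$. After choosing a scaling constant $c>0$, we define $M^{(0)}$ by
\[
M^{(0)}[C_j,C_j]=cK_j \quad (j=1,\ldots,k), \qquad M^{(0)}[C_i,C_j]=0 \quad (i\ne j).
\]
Equivalently, after a simultaneous permutation of rows and columns, $M^{(0)}=\operatorname{diag}(cK_1,\ldots,cK_k)$.

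The linear constraints are checked one at a time.

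\textbf{Symmetry.} Each $K_j$ is symmetric, and the off-diagonal blocks are zero, so $M^{(0)}$ is symmetric.

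\textbf{Sign constraint.} Off-diagonal entries of $M^{(0)}$ are either $-c<0$ (pairs inside the same block) or $0$ (pairs in different blocks), so $M_{ij}\le 0$ for $i\ne j$.

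\textbf{Nullspace constraint.} The vector $u_j$ is supported on $C_j$. Since the inter-block entries vanish, $M^{(0)}u_j$ is zero outside $C_j$. On $C_j$ it equals $cK_j\mathbf 1_{n_j}=c(n_j\mathbf 1_{n_j}-n_j\mathbf 1_{n_j})=0$, and the singleton case is trivial. Hence $M^{(0)}U=0$.

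The only constraint requiring a choice is the semidefinite one, $M_j\succeq\varepsilon P_j$. Here $M_j=cK_j=cn_jP_j$, so
\[
M_j-\varepsilon P_j=(cn_j-\varepsilon)P_j .
\]
Since $P_j$ is an orthogonal projector, $P_j\succeq 0$, so the difference is positive semidefinite whenever $cn_j\ge\varepsilon$. Taking
\[
c=\varepsilon/\min\{n_j: n_j\ge 2\},
\]
or simply $c=\varepsilon$ because $n_j\ge 1$, meets this condition for every non-singleton block.

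Singleton blocks need a separate remark. When $n_j=1$ we have $P_j=1-1=0$, so the constraint reads $M_j\succeq 0$. This holds because $M_j=0$; in fact any feasible $M$ has $M_j=0$ there, since $MU=0$ forces $M_{ii}=-\sum_{l\ne i}M_{il}$, and here that sum is zero.

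Accordingly, the only genuine step is matching the scaling $c$ against $\varepsilon$ through the identity $K_j=n_jP_j$. Everything else is direct verification, so no step of this argument presents a serious obstacle. We conclude that $M^{(0)}$ is feasible for (P), and the feasible set is nonempty.
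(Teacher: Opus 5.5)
Your proof is correct and is essentially the paper's argument: the paper also takes the block-diagonal matrix of scaled complete-graph Laplacians, but sets each block to $\varepsilon P_j$ (your $c$ replaced blockwise by $\varepsilon/n_j$), so the LMI holds with equality and no uniform scale needs to be chosen. Of your two options, use the fallback $c=\varepsilon$, since $\min\{n_j:n_j\ge 2\}$ is undefined when every block is a singleton.
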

	
	\begin{proof}
		We construct one feasible matrix explicitly. First arrange the vertices
		blockwise according to the prescribed order
		\[
		C_1,C_2,\ldots,C_k.
		\]
		In this blockwise ordering, define
		\[
		\widehat M
		=
		\operatorname{diag}(\varepsilon P_1,\varepsilon P_2,\ldots,\varepsilon P_k).
		\]
		Then transform $\widehat M$ back to the original vertex ordering.
		
		Each block $\varepsilon P_j$ is symmetric and satisfies
		\[
		\varepsilon P_j\mathbf 1_{n_j}=0.
		\]
		Moreover, for $p\ne q$,
		\[
		(\varepsilon P_j)_{pq}
		=
		-\frac{\varepsilon}{n_j}
		\le0.
		\]
		Also,
		\[
		\varepsilon P_j\succeq\varepsilon P_j.
		\]
		Since the matrix is block diagonal with respect to the prescribed partition, it
		satisfies
		\[
		MU=0.
		\]
		Therefore, the constructed matrix satisfies all constraints of $(P)$, and the
		feasible set is nonempty.
	\end{proof}
	
	\begin{proposition}[Existence and uniqueness of the minimizer]
		Problem $(P)$ admits a unique global minimizer.
	\end{proposition}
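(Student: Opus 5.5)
The plan is the standard projection argument: problem $(P)$ asks for the point of a nonempty closed convex set nearest to $L$ in a Hilbert-space norm. Existence will follow from coercivity and closedness, and uniqueness from strict convexity of $\|M-L\|_F^2$.

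\emph{Step 1 (closed and convex).} Let $\mathcal F\subseteq\mathbb R^{n\times n}$ be the feasible set. It is the intersection of the following sets:
\begin{itemize}
\item the linear subspace $\{M: MU=0,\ M=M^T\}$;
\item the closed half-spaces $\{M: M_{ij}\le 0\}$ for $i\ne j$;
\item for each $j$, the set $\{M: M[C_j,C_j]-\varepsilon P_j\succeq 0\}$.
\end{itemize}
The last set is the preimage of the closed convex cone of positive semidefinite $n_j\times n_j$ matrices under the affine map $M\mapsto M[C_j,C_j]-\varepsilon P_j$. Equivalently, it is $\bigcap_{x}\{M: x^TM_jx\ge \varepsilon x^TP_jx\}$, an intersection of closed half-spaces. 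Every set in the list is closed and convex, so $\mathcal F$ is closed and convex. By the preceding Proposition (feasibility), $\mathcal F$ is nonempty.

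\emph{Step 2 (existence).} Fix a feasible $M_0$, for instance the block-diagonal matrix built from the $\varepsilon P_j$, and set $r=\|M_0-L\|_F$. Any minimizer must lie in $\mathcal F\cap\{M:\|M-L\|_F\le r\}$. This set is nonempty because it contains $M_0$. It is also closed and bounded in the finite-dimensional space $\mathbb R^{n\times n}$, hence compact. The objective is continuous, so by Weierstrass it attains its minimum on this compact set, and that minimum is a global minimizer over $\mathcal F$.

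\emph{Step 3 (uniqueness).} Suppose $M_1\ne M_2$ are both minimizers with value $\mu$. Then $\bar M=\tfrac12(M_1+M_2)\in\mathcal F$ by convexity. The parallelogram identity in the Frobenius inner product gives
\[
\|\bar M-L\|_F^2=\tfrac12\|M_1-L\|_F^2+\tfrac12\|M_2-L\|_F^2-\tfrac14\|M_1-M_2\|_F^2<\mu ,
\]
which contradicts minimality. Hence the minimizer $M^\star$ is unique. Equivalently, $M^\star$ is the Frobenius projection of $L$ onto $\mathcal F$.

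No step is a real obstacle. The only point needing care is the closedness of the semidefinite constraints, and the half-space description in Step 1 settles it. Note also that the feasible set itself is unbounded, for example under scaling of a block, so boundedness has to come from restricting to the sublevel set in Step 2 rather than from $\mathcal F$.
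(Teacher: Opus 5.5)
Your proof is correct and follows the same route as the paper. The feasible set is nonempty, closed and convex; existence comes from continuity plus coercivity, which you make concrete by restricting to the compact set $\mathcal F\cap\{M:\|M-L\|_F\le r\}$; uniqueness comes from strict convexity, which you make concrete via the parallelogram identity rather than the Hessian $2I$. One small caveat on your closing side remark: $\mathcal F$ is not unbounded when every block is a singleton ($k=n$), since then $U$ is a permutation matrix and $MU=0$ forces $\mathcal F=\{0\}$, but nothing in your argument depends on that remark.
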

	
	\begin{proof}
		The feasible set of $(P)$ is nonempty by the previous proposition. It is closed
		and convex, since it is defined by affine equalities, linear inequalities, and
		linear matrix inequalities.
		
		The objective function
		\[
		f(M)=\|M-L\|_F^2
		\]
		is continuous, coercive, and strictly convex in $M$. Indeed, in vectorized
		coordinates,
		\[
		f(M)
		=
		\|\operatorname{vec}(M)-\operatorname{vec}(L)\|_2^2,
		\]
		whose Hessian is $2I$.
		
		Since the feasible set is nonempty and closed, and since the objective is
		continuous and coercive, a minimizer exists. Moreover, the objective is strictly
		convex and the feasible set is convex, the minimizer is unique.
	\end{proof}
	
	\begin{remark}
		The formulation $(P)$ is stronger than the formulation containing only the
		constraint $MU=0$. The constraint $MU=0$ guarantees that the optimized graph has
		at least $k$ connected components. The additional semidefinite constraints
		\[
		M_j\succeq \varepsilon P_j,\qquad j=1,\ldots,k,
		\]
		prevent any prescribed block from splitting into smaller connected components.
		Therefore, the strengthened formulation is designed to produce exactly the
		prescribed components.
	\end{remark}


	\section{The Nullspace Constraint and the At-Least-k Component Property}
	
	We first record what is guaranteed by the nullspace constraint
	\[
	MU=0
	\]
	alone. This constraint ensures that the prescribed block-indicator vectors belong to the nullspace of $M$. However, it does not by itself guarantee that the optimized graph has exactly $k$ connected components.
	
	\begin{theorem}[Nullspace constraint and prescribed block indicators]
	Let $M\in\mathbb R^{n\times n}$ satisfy
	\[
	MU=0,
	\]
	where
	\[
	U=[u_1,u_2,\ldots,u_k]
	\]
	is the block-indicator matrix associated with the prescribed partition
	\[
	V=C_1\cup C_2\cup\cdots\cup C_k,
	\qquad
	C_i\cap C_j=\emptyset\quad (i\ne j).
	\]
	Then
	\[
	\operatorname{span}\{u_1,u_2,\ldots,u_k\}\subseteq\ker(M).
	\]
	Consequently,
	\[
	\dim\ker(M)\ge k.
	\]
	If, in addition, $M$ is a graph Laplacian, then the graph associated with $M$ has at least $k$ connected components.
	\end{theorem}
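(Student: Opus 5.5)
The plan is to read the constraint $MU=0$ column by column and then combine linear independence of the indicators with the nullspace lemma of the Preliminaries.

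First, since $MU=[Mu_1,\ldots,Mu_k]$, the hypothesis $MU=0$ says exactly that $Mu_j=0$ for each $j=1,\ldots,k$. Because $\ker(M)$ is a linear subspace, it contains every linear combination $\sum_j c_ju_j$, which gives
\[
\operatorname{span}\{u_1,\ldots,u_k\}\subseteq\ker(M).
\]

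Second, I will show that $u_1,\ldots,u_k$ are linearly independent, so that the span has dimension exactly $k$. Suppose $\sum_j c_ju_j=0$. Since each $C_i$ is nonempty, I can pick a vertex $p\in C_i$. The blocks are pairwise disjoint, so $(u_j)_p=\delta_{ij}$. Evaluating the relation at coordinate $p$ then gives $c_i=0$. Hence
\[
\dim\ker(M)\ge\dim\operatorname{span}\{u_1,\ldots,u_k\}=k.
\]
Equivalently, one can note that the columns of $U$ have disjoint nonempty supports, so $U^TU=\operatorname{diag}(n_1,\ldots,n_k)$ is invertible and $U$ has rank $k$.

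Finally, suppose in addition that $M$ is a graph Laplacian: symmetric, with nonpositive off-diagonal entries and $M\mathbf 1_n=0$. The last condition already follows from $MU=0$ because $\mathbf 1_n=\sum_j u_j$. The Nullspace lemma in the Preliminaries then applies with $H=M$ and says that $\dim\ker(M)$ equals the number of connected components of the associated graph. Combined with the previous step, this gives at least $k$ components.

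None of these steps is a real obstacle. The only point requiring care is linear independence, which uses both that the blocks are nonempty and that they are disjoint; I will state both hypotheses explicitly there.
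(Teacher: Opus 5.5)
Your proposal is correct and follows essentially the same route as the paper's proof: read $MU=0$ as $Mu_j=0$ for each $j$, conclude that the span lies in $\ker(M)$, use linear independence of the indicators of nonempty disjoint blocks to get $\dim\ker(M)\ge k$, and then invoke the Laplacian nullity lemma. Your coordinate argument (or $U^TU=\operatorname{diag}(n_1,\ldots,n_k)$) for linear independence just spells out what the paper asserts directly.
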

	
	\begin{proof}
	Since
	\[
	U=[u_1,u_2,\ldots,u_k],
	\]
	the constraint
	\[
	MU=0
	\]
	is equivalent to
	\[
	Mu_j=0,\qquad j=1,2,\ldots,k.
	\]
	Thus,
	\[
	u_j\in\ker(M),\qquad j=1,2,\ldots,k.
	\]
	
	Now let
	\[
	z\in\operatorname{span}\{u_1,u_2,\ldots,u_k\}.
	\]
	Then there exist scalars $\alpha_1,\alpha_2,\ldots,\alpha_k$ such that
	\[
	z=\sum_{j=1}^{k}\alpha_j u_j.
	\]
	Applying $M$, we obtain
	\[
	Mz
	=
	M\left(\sum_{j=1}^{k}\alpha_j u_j\right)
	=
	\sum_{j=1}^{k}\alpha_j Mu_j
	=
	0.
	\]
	Hence
	\[
	z\in\ker(M),
	\]
	and therefore
	\[
	\operatorname{span}\{u_1,u_2,\ldots,u_k\}\subseteq\ker(M).
	\]
	
	Since the blocks $C_1,\ldots,C_k$ are nonempty and pairwise disjoint, their indicator vectors
	\[
	u_1,u_2,\ldots,u_k
	\]
	are linearly independent. Therefore,
	\[
	\dim\operatorname{span}\{u_1,u_2,\ldots,u_k\}=k.
	\]
	Since this $k$-dimensional subspace is contained in $\ker(M)$, we obtain
	\[
	\dim\ker(M)\ge k.
	\]
	
	If $M$ is a graph Laplacian, then by the standard Laplacian nullity theorem, the nullity of $M$ equals the number of connected components of the associated graph. Hence the graph associated with $M$ has at least $k$ connected components.
	\end{proof}
	
	\begin{remark}
	The preceding theorem shows that the constraint $MU=0$ is sufficient to place the prescribed block-indicator vectors in the nullspace of $M$. However, it does not exclude additional nullspace directions. Such additional nullspace directions arise precisely when one or more prescribed blocks split into smaller disconnected components. Therefore, the constraint $MU=0$ guarantees at least $k$ connected components, but not necessarily exactly $k$ connected components.
	\end{remark}
	
	\begin{remark}
	To obtain exactly $k$ connected components, one needs the stronger condition
	\[
	\dim\ker(M)=k.
	\]
	Equivalently, if the eigenvalues of $M$ are ordered as
	\[
	0=\lambda_1(M)=\lambda_2(M)=\cdots=\lambda_k(M)<\lambda_{k+1}(M),
	\]
	then the associated graph has exactly $k$ connected components. In the strengthened formulation $(P)$, this is achieved by imposing the block-connectivity constraints
	\[
	M_j\succeq \varepsilon P_j,\qquad j=1,\ldots,k.
	\]
	\end{remark}

\section{Main Theorem: Exactly Prescribed Components}

We now show that the strengthened problem $(P)$ produces a Laplacian matrix whose connected components are exactly the prescribed blocks
\[
C_1,\ldots,C_k.
\]

\begin{theorem}[Nearest Laplacian with exactly prescribed components]
	Let $L\in\mathbb R^{n\times n}$ be a given graph Laplacian, and let
	\[
	V=C_1\cup C_2\cup\cdots\cup C_k,
	\qquad
	C_i\cap C_j=\emptyset\quad (i\ne j),
	\]
	be a prescribed partition of the vertex set into nonempty blocks. For each block $C_j$, let $u_j$ be its indicator vector, and define
	\[
	U=[u_1,u_2,\ldots,u_k].
	\]
	Let $M^\star$ be the unique minimizer of problem $(P)$.
	
	Then $M^\star$ is a weighted graph Laplacian whose associated graph has exactly the prescribed connected components
	\[
	C_1,\ldots,C_k.
	\]
	Moreover,
	\[
	\ker(M^\star)=\operatorname{span}\{u_1,u_2,\ldots,u_k\}.
	\]
	If $k<n$, then
	\[
	\lambda_{k+1}(M^\star)\ge \varepsilon>0.
	\]
\end{theorem}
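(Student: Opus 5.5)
Since $M^\star$ is feasible for $(P)$, the plan is to use only the feasibility constraints: the optimality of $M^\star$ plays no role beyond its existence, which is guaranteed by the earlier existence and uniqueness proposition. First, $M^\star$ is symmetric and has nonpositive off-diagonal entries. Since $\mathbf 1_n=u_1+\cdots+u_k$, the constraint $M^\star U=0$ gives $M^\star\mathbf 1_n=0$. Hence $M^\star$ satisfies the hypotheses of the nullspace lemma in the Preliminaries, so it is a weighted graph Laplacian with $M^\star\succeq0$. The theorem on the nullspace constraint already gives $\operatorname{span}\{u_1,\ldots,u_k\}\subseteq\ker(M^\star)$.

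Next I will show that there are no edges between different blocks. For $p\in C_i$, take the row $p$ of $M^\star u_i=0$:
\[
\sum_{q\in C_i}M^\star_{pq}=0.
\]
Together with $M^\star\mathbf 1_n=0$, this gives
\[
\sum_{q\notin C_i}M^\star_{pq}=0.
\]
Every term in this last sum is nonpositive, so each term is zero. Thus $M^\star_{pq}=0$ whenever $p,q$ lie in different blocks. Up to the simultaneous permutation that orders vertices block by block, $M^\star=\operatorname{diag}(M^\star_1,\ldots,M^\star_k)$. Each block $M^\star_j$ is itself symmetric, has nonpositive off-diagonal entries, and satisfies $M^\star_j\mathbf 1_{n_j}=0$, so it is a Laplacian on $C_j$.

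The key step is to exclude extra kernel directions inside a block. If $n_j=1$, then $M^\star_j=0$ and its kernel is $\operatorname{span}\{1\}$. If $n_j\ge2$, let $x\in\ker(M^\star_j)$ and write $x=c\mathbf 1_{n_j}+y$ with $y\perp\mathbf 1_{n_j}$. Then $M^\star_jy=0$. The constraint $M^\star_j\succeq\varepsilon P_j$ gives
\[
0=y^TM^\star_jy\ge\varepsilon\|y\|_2^2,
\]
so $y=0$. Hence $\ker(M^\star_j)=\operatorname{span}\{\mathbf 1_{n_j}\}$, and by the Preliminaries lemma the subgraph on $C_j$ is connected. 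Because the matrix is block diagonal, $\ker(M^\star)$ is the direct sum of the block kernels, which gives $\ker(M^\star)=\operatorname{span}\{u_1,\ldots,u_k\}$. There are no edges between blocks and each block is connected, so the connected components are exactly $C_1,\ldots,C_k$.

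For the eigenvalue bound, I expect the main obstacle to be working with the full matrix rather than blockwise. I would apply Courant--Fischer to $M^\star$:
\[
\lambda_{k+1}(M^\star)=\min_{0\ne x\perp\ker(M^\star)}\frac{x^TM^\star x}{x^Tx}.
\]
This form is valid because $M^\star\succeq0$ and $\ker(M^\star)$ has dimension exactly $k$, so $\lambda_1=\cdots=\lambda_k=0$. If $x\perp u_j$ for all $j$, then each restriction $x_{C_j}$ is orthogonal to $\mathbf 1_{n_j}$. Block diagonality then gives
\[
x^TM^\star x=\sum_j x_{C_j}^TM^\star_jx_{C_j}\ge\varepsilon\sum_j\|x_{C_j}\|^2=\varepsilon\|x\|^2.
\]
Singleton blocks contribute $0$ on both sides, since there $x_{C_j}=0$. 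Hence $\lambda_{k+1}(M^\star)\ge\varepsilon$ whenever $k<n$.
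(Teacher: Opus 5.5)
Your proof is correct and follows the same route as the paper's. You get the Laplacian property from $\mathbf 1_n=\sum_j u_j$, show the cross-block entries vanish by the row-sum and sign argument, block-diagonalize, and use $M_j^\star\succeq\varepsilon P_j$ to force $\ker(M_j^\star)=\operatorname{span}\{\mathbf 1_{n_j}\}$. The differences at the end are only in bookkeeping: you read off $\ker(M^\star)$ directly as the direct sum of the block kernels and bound $\lambda_{k+1}(M^\star)$ by Courant--Fischer on $U^\perp$, whereas the paper obtains the kernel through the nullity theorem and a dimension count, and uses that the spectrum of $M^\star$ is the union of the block spectra, which also gives the exact identity $\lambda_{k+1}(M^\star)=\min_{n_j\ge2}\lambda_2(M_j^\star)$.
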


\begin{proof}
	Since $M^\star$ is feasible for $(P)$, it satisfies
	\[
	M^\star U=0.
	\]
	As
	\[
	U=[u_1,u_2,\ldots,u_k],
	\]
	this is equivalent to
	\[
	M^\star u_j=0,\qquad j=1,\ldots,k.
	\]
	Therefore,
	\[
	u_j\in\ker(M^\star),\qquad j=1,\ldots,k.
	\]
	
	Since the prescribed blocks form a partition of the vertex set,
	\[
	\mathbf 1_n=u_1+u_2+\cdots+u_k.
	\]
	Thus,
	\[
	M^\star\mathbf 1_n
	=
	\sum_{j=1}^{k}M^\star u_j
	=
	0.
	\]
	Together with the constraints
	\[
	M^\star=(M^\star)^T,
	\qquad
	M^\star_{ij}\le0\quad (i\ne j),
	\]
	this shows that $M^\star$ is a weighted graph Laplacian.
	
	We next show that there are no edges between distinct prescribed blocks. Fix a block $C_j$ and a vertex $i\in C_j$. Since
	\[
	M^\star u_j=0,
	\]
	the $i$-th component gives
	\[
	\sum_{\ell\in C_j}M^\star_{i\ell}=0.
	\]
	On the other hand, since
	\[
	M^\star\mathbf 1_n=0,
	\]
	we also have
	\[
	\sum_{\ell=1}^{n}M^\star_{i\ell}=0.
	\]
	Subtracting these two equalities yields
	\[
	\sum_{\ell\notin C_j}M^\star_{i\ell}=0.
	\]
	For every $\ell\notin C_j$, we have $i\ne \ell$, and hence
	\[
	M^\star_{i\ell}\le0.
	\]
	A sum of nonpositive numbers can be zero only if every term is zero. Therefore,
	\[
	M^\star_{i\ell}=0,
	\qquad
	i\in C_j,\quad \ell\notin C_j.
	\]
	Thus, there are no edges between distinct prescribed blocks.
	
	Consequently, after a suitable permutation of the vertices, $M^\star$ becomes block diagonal:
	\[
	Q M^\star Q^T
	=
	\operatorname{diag}(M^\star_1,M^\star_2,\ldots,M^\star_k),
	\]
	where
	\[
	M^\star_j=M^\star[C_j,C_j].
	\]
	
	We now prove that each prescribed block is internally connected. Since there are no entries connecting $C_j$ with the other blocks and
	\[
	M^\star u_j=0,
	\]
	the corresponding principal block satisfies
	\[
	M^\star_j\mathbf 1_{n_j}=0.
	\]
	Thus, $\mathbf 1_{n_j}$ is always a zero direction of $M^\star_j$.
	
	From the constraint in $(P)$, we have
	\[
	M^\star_j\succeq \varepsilon P_j.
	\]
	Therefore, for every $x\in\mathbb R^{n_j}$,
	\[
	x^TM^\star_jx\ge \varepsilon x^TP_jx.
	\]
	If $x\perp \mathbf 1_{n_j}$, then $P_jx=x$, and hence
	\[
	x^TM^\star_jx\ge \varepsilon\|x\|_2^2.
	\]
	Thus, for every non-singleton block $C_j$, the matrix $M^\star_j$ is positive definite on
	\[
	\mathbf 1_{n_j}^{\perp}.
	\]
	It follows that
	\[
	\ker(M^\star_j)=\operatorname{span}\{\mathbf 1_{n_j}\},
	\]
	and hence
	\[
	\lambda_2(M^\star_j)\ge\varepsilon>0
	\]
	for every block with $n_j\ge2$. By the standard Laplacian nullity theorem, the subgraph induced by each non-singleton block $C_j$ is connected. Singleton blocks are connected by convention.
	
	Thus, there are no edges between distinct prescribed blocks, and each prescribed block is internally connected. Hence the connected components of the graph associated with $M^\star$ are exactly
	\[
	C_1,C_2,\ldots,C_k.
	\]
	
	Therefore, the graph has exactly $k$ connected components. By the Laplacian nullity theorem,
	\[
	\dim\ker(M^\star)=k.
	\]
	Since
	\[
	u_1,u_2,\ldots,u_k
	\]
	are linearly independent and belong to $\ker(M^\star)$, we conclude that
	\[
	\ker(M^\star)
	=
	\operatorname{span}\{u_1,u_2,\ldots,u_k\}.
	\]
	
	Finally, since $M^\star$ is permutation-similar to
	\[
	\operatorname{diag}(M^\star_1,M^\star_2,\ldots,M^\star_k),
	\]
	its eigenvalues are the union of the eigenvalues of the block matrices $M^\star_j$. Each block contributes exactly one zero eigenvalue. Hence the first $k$ eigenvalues of $M^\star$ are zero. If $k<n$, then at least one block is non-singleton, and
	\[
	\lambda_{k+1}(M^\star)
	=
	\min_{\{j:n_j\ge2\}}\lambda_2(M^\star_j)
	\ge
	\varepsilon.
	\]
	This completes the proof.
\end{proof}

\begin{corollary}
	Under the constraints of problem $(P)$, the connected components of the optimized graph are precisely the prescribed blocks
	\[
	C_1,\ldots,C_k.
	\]
\end{corollary}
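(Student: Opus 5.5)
The corollary reads as a statement about every matrix $M$ that satisfies the constraints of $(P)$, not only about the minimizer $M^\star$. The plan is therefore to rerun the argument of the main theorem for an arbitrary feasible $M$. That argument only used feasibility of $M^\star$, never optimality, so the same steps should go through verbatim. As a sanity check on the literal reading, applying the result to the minimizer gives the corollary immediately from the theorem.

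\textbf{Step 1: $M$ is a Laplacian with no cross-block edges.} From $MU=0$ and $\mathbf 1_n=\sum_j u_j$ we get $M\mathbf 1_n=0$. Together with symmetry and $M_{ij}\le 0$ for $i\ne j$, this makes $M$ a weighted graph Laplacian, so the nullspace lemma of the Preliminaries applies. Next fix $i\in C_j$. The $i$-th row of $Mu_j=0$ gives $\sum_{\ell\in C_j}M_{i\ell}=0$. Subtracting this from the $i$-th row of $M\mathbf 1_n=0$ leaves $\sum_{\ell\notin C_j}M_{i\ell}=0$. Every term in this sum is nonpositive, so each term vanishes. Hence no edge joins distinct prescribed blocks, and $M$ is permutation-similar to $\operatorname{diag}(M_1,\ldots,M_k)$.

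\textbf{Step 2: each block is connected.} By Step 1, each $M_j$ is itself a Laplacian on $C_j$, with $M_j\mathbf 1_{n_j}=0$. For $x\perp\mathbf 1_{n_j}$ we have $P_jx=x$, so the constraint $M_j\succeq\varepsilon P_j$ gives $x^TM_jx\ge\varepsilon\|x\|_2^2$. Thus $\ker(M_j)=\operatorname{span}\{\mathbf 1_{n_j}\}$. By the nullspace lemma, the subgraph induced on $C_j$ is connected; singleton blocks are trivially connected.

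\textbf{Conclusion.} Steps 1 and 2 together show that the connected components are exactly $C_1,\ldots,C_k$. The only point needing care is that connectivity must be read off from $M_j$ as a Laplacian in its own right. This is valid precisely because the cross-block entries vanish, so Step 1 must come before Step 2. No other step presents a real obstacle.
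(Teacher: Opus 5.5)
Your proof is correct and follows the paper's route: the paper deduces the corollary from the main theorem, whose proof is exactly your Steps 1 and 2 and uses only feasibility of $M^\star$, so your reading that the conclusion holds for every feasible $M$ is also accurate. One minor correction to your closing remark: $M_j\mathbf 1_{n_j}=0$ already follows from the rows of $Mu_j=0$ indexed by $C_j$, so Step 1 is needed only to rule out cross-block edges in the final conclusion, not to make $M_j$ a Laplacian in its own right.
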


\begin{proof}
	This follows directly from the preceding theorem.
\end{proof}
	
	\section{Algorithms}
	\begin{algorithm}[H]
		\caption{Blockwise Nearest-Laplacian Optimization}
		\label{alg:blockwise_optimization}
		\DontPrintSemicolon
		\footnotesize
		
		\KwData{Original Laplacian $L$, prescribed partition $C_1,\ldots,C_k$, parameter $\varepsilon>0$.}
		\KwResult{Optimized blocks $M_1^\star,\ldots,M_k^\star$.}
		
		Construct the block-indicator vectors $u_j$ and set
		\[
		U=[u_1,\ldots,u_k].
		\]
		
		\If{the graph associated with $L$ already has connected components $C_1,\ldots,C_k$}{
			Set $M^\star=L$ and stop.
		}
		
		\For{$j=1,\ldots,k$}{
			Set
			\[
			L_j=L[C_j,C_j],
			\qquad
			n_j=|C_j|.
			\]
			
			\If{$n_j=1$}{
				Set
				\[
				M_j^\star=[0].
				\]
			}
			\Else{
				Choose $Q_j\in\mathbb R^{n_j\times(n_j-1)}$ whose columns form an orthonormal basis of $\mathbf 1_{n_j}^{\perp}$.\;
				
				Solve
				\[
				\begin{aligned}
					\min_{M_j}\quad
					&\|M_j-L_j\|_F^2\\
					\text{s.t.}\quad
					&M_j\mathbf 1_{n_j}=0,\quad M_j=M_j^T,\\
					&(M_j)_{pq}\le0\quad(p\ne q),\\
					&Q_j^TM_jQ_j\succeq \varepsilon I_{n_j-1}.
				\end{aligned}
				\]
			}
		}
		
	\end{algorithm}

	\begin{algorithm}[H]
		\caption{Assembly and Verification of the Optimized Laplacian}
		\label{alg:assembly_verification}
		\DontPrintSemicolon
		\footnotesize
		
		\KwData{Optimized blocks $M_1^\star,\ldots,M_k^\star$, prescribed partition $C_1,\ldots,C_k$, tolerance $\tau>0$.}
		\KwResult{Optimized Laplacian $M^\star$ and optimized weight matrix $W^\star$.}
		
		Assemble $M^\star$ by placing the blocks $M_1^\star,\ldots,M_k^\star$ in the prescribed vertex positions.\;
		
		Set all cross-block entries of $M^\star$ equal to zero.\;
		
		Construct the optimized weight matrix by
		\[
		W^\star_{ij}=-M^\star_{ij},\qquad i\ne j,
		\]
		and
		\[
		W^\star_{ii}=0.
		\]
		
		Apply numerical cleanup:
		\[
		W^\star_{ij}=0
		\quad
		\text{whenever}
		\quad
		|W^\star_{ij}|<\tau.
		\]
		
		Construct the weighted graph associated with $W^\star$ and compute its connected components.\;
		
		Verify that the connected components are exactly
		\[
		C_1,\ldots,C_k.
		\]
		
		Compute the eigenvalues of $M^\star$ and verify that, whenever $k<n$,
		\[
		0=\lambda_1(M^\star)=\cdots=\lambda_k(M^\star)
		<
		\lambda_{k+1}(M^\star).
		\]
		
		Return $M^\star$ and $W^\star$.\;
		
	\end{algorithm}
	\FloatBarrier
	
	
\section{Numerical Experiments}

In this section, we illustrate the proposed nearest-Laplacian construction
through numerical examples. The computations were carried out in MATLAB using
CVX. For each example, the optimized Laplacian is denoted by $M^\star$, and
the associated optimized weight matrix is obtained from
\[
W^\star_{ij}=-M^\star_{ij},\qquad i\ne j,
\]
with
\[
W^\star_{ii}=0.
\]
For graph construction and visualization, entries of $W^\star$ whose absolute
values are below a prescribed tolerance are set to zero.

In the numerical experiments, the parameter $\varepsilon$ is chosen as a small
positive connectivity margin. Its role is to enforce strict internal
connectivity of each prescribed block. Larger values of $\varepsilon$ impose
stronger block connectivity but may increase the Frobenius distance
$\|M^\star-L\|_F$.\\

\textbf{Example 1: Faction-Consistent Reconstruction of the Sampson Monastery Network}

We now apply the proposed nearest-Laplacian construction to the Sampson
monastery data at time period 3 \cite{ArefEtAl2020, Sampson1968}. This data set consists of affect relations
among $18$ trainee monks. In the original data, each monk ranked the three
members he liked the most and the three members he liked the least. Thus,
the data are given as a directed signed weighted matrix
$S\in\mathbb{R}^{18\times 18}$, whose entries lie between $-3$ and $3$.
Since the present formulation is developed for undirected nonnegative
weighted graph Laplacians, we first extract the positive-affection part of
the data by setting
$$
A^+_{ij}=\max\{S_{ij},0\}.
$$
We then construct an undirected nonnegative weighted adjacency matrix by
symmetrization,
$$
W=\frac{A^+ +(A^+)^T}{2},
$$
and define the corresponding graph Laplacian by
$$
L=D-W,\qquad D=\operatorname{diag}(W\mathbf{1}).
$$
The prescribed components are chosen according to the standard social
factions reported for the Sampson monastery data:
$$
C_1=\{1,2,7,12,14,15,16\},\qquad
C_2=\{4,5,6,9,11\},
$$
$$
C_3=\{3,17,18\},\qquad
C_4=\{8,10,13\}.
$$
Here $C_1$ corresponds to the Young Turks, $C_2$ to the Loyal Opposition,
$C_3$ to the Outcasts, and $C_4$ to the Waverers. From these prescribed
sets, we form the block-indicator matrix
$$
U=[u_1,u_2,u_3,u_4],
$$
where $u_j$ is the indicator vector of $C_j$. The input to our algorithm is
therefore the Laplacian $L$, the prescribed partition
$C_1,C_2,C_3,C_4$, the block-indicator matrix $U$, and a positive
connectivity parameter $\varepsilon$. Solving the proposed convex
nearest-Laplacian problem produces an optimized weighted Laplacian
$M^\star$ that is nearest to $L$ in the Frobenius norm and whose associated
graph has exactly the prescribed four connected components. Consequently,
the value $\|M^\star-L\|_F$ measures the minimum Laplacian perturbation
required to enforce the prescribed faction structure on the weighted
positive-affection network.\\

	\label{fig:example1}

{
The original graph does not already have the prescribed faction-based
connected-component structure, and therefore the blockwise semidefinite
optimization is used. For this experiment, we choose
\[
\varepsilon=0.10.
\]
The optimized graph has exactly four connected components, and the prescribed
partition is verified exactly.

The computed Frobenius distance is
\[
\|M^\star-L\|_F=10.2717983464349238.
\]
Moreover, the first four eigenvalues of $M^\star$ are numerically zero, while
\[
\lambda_5(M^\star)=2.5782721444>0.
\]
Hence,
\[
\dim\ker(M^\star)=4,
\]
which confirms that the optimized graph has exactly the prescribed four
connected components.

The algebraic connectivity values of the four prescribed blocks are
\[
\lambda_2(M^\star_1)=2.5783,\qquad
\lambda_2(M^\star_2)=3.1957,
\]
\[
\lambda_2(M^\star_3)=4.0000,\qquad
\lambda_2(M^\star_4)=6.4226.
\]
Thus, all prescribed faction-based blocks are internally connected.

\begin{figure}[H]
    \centering
    \includegraphics[width=\textwidth]{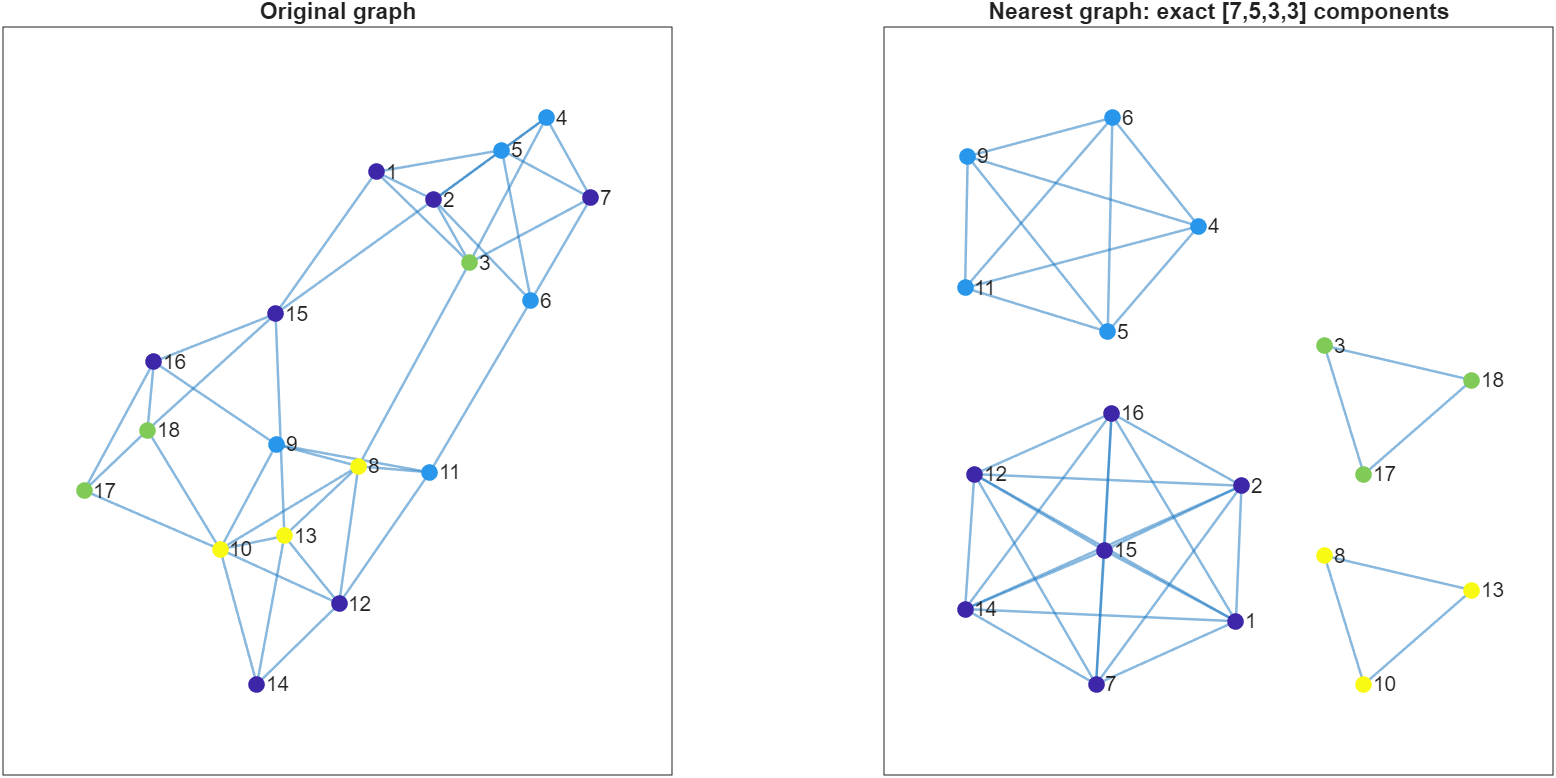}
    \caption{Original positive-affection network and its nearest faction-consistent weighted reconstruction. The optimized graph has exactly the prescribed Young Turks, Loyal Opposition, Outcasts, and Waverers factions as its connected components..}
    \label{fig:real18_faction}
\end{figure}

The numerical residuals confirm the validity of the computed solution. The
symmetry residual is zero to machine precision, the row-sum residual is
\[
7.511\times 10^{-11},
\]
the nullspace residual is
\[
\|M^\star U\|_F=1.061\times 10^{-10},
\]
and the positive off-diagonal violation is zero.

\begin{table}[H]
    \centering
    \caption{Numerical results for the faction-based real $18\times18$ data example.}
    \label{tab:real18_faction}
    \begin{tabular}{lc}
        \hline
        Quantity & Value \\
        \hline
        $\|M^\star-L\|_F$ & $10.2717983464349238$ \\
        Number of connected components & $4$ \\
        Exact prescribed partition verified & $1$ \\
        $\lambda_5(M^\star)$ & $2.5782721444$ \\
        $\lambda_2(M^\star_1)$ & $2.5783$ \\
        $\lambda_2(M^\star_2)$ & $3.1957$ \\
        $\lambda_2(M^\star_3)$ & $4.0000$ \\
        $\lambda_2(M^\star_4)$ & $6.4226$ \\
        $\|M^\star U\|_F$ & $1.061\times 10^{-10}$ \\
        Row-sum residual & $7.511\times 10^{-11}$ \\
        Positive off-diagonal violation & $0$ \\
        Minimum block LMI margin & $2.478$ \\
        Minimum $\lambda_2-\varepsilon$ & $2.478$ \\
        \hline
    \end{tabular}
\end{table}

In the faction-based real $18\times18$ example, the minimum distance is
\[
\|M^\star-L\|_F=10.2717983464349238.
\]
Hence, any other feasible Laplacian satisfying the same prescribed component
constraints and the same connectivity-margin condition is at least this far
from the original Laplacian, up to numerical solver tolerance.

The Sampson monastery experiment illustrates a socially interpretable application of the proposed nearest-Laplacian construction. The four faction sets are prescribed from the established sociological classification of the monks, whereas the observed positive-affection network does not possess these factions as its exact connected components. The optimized Laplacian $M^\star $ herefore represents the unique minimum-perturbation reconstruction of the observed network whose connected components coincide exactly with the Young Turks, Loyal Opposition, Outcasts, and Waverers. In particular, all positive cross-faction interactions are removed, while the within-faction weights are adjusted as little as possible subject to maintaining internal connectivity of every faction. Consequently, the distance $\||M^\star - L||_F$  provides a quantitative measure of the inconsistency between the observed positive-affection network and the prescribed faction structure, whereas the algebraic connectivities  $\lambda_2(M^\star_j)$ quantify the internal connectivity of the reconstructed faction subnetworks.

\textbf{Note:} The corresponding optimized Laplacian matrix $M^\star$ is displayed in block
form in Appendix~\ref{app:real18_faction_matrix}.
}\\

\textbf{Example 2}: Prescribed component sizes \texorpdfstring{$[3,2,2,1]$}{[3,2,2,1]}

We consider a graph with $n=8$ vertices. The original graph is connected, and
we prescribe the following four connected components:
\[
C_1=\{1,2,3\},\qquad
C_2=\{4,5\},\qquad
C_3=\{6,7\},\qquad
C_4=\{8\}.
\]
Thus,
\[
k=4,
\]
and the prescribed component sizes are
\[
[3,2,2,1].
\]

Let
\[
U=[u_1,u_2,u_3,u_4]
\]
be the corresponding block-indicator matrix. Solving the strengthened
nearest-Laplacian problem produces an optimized Laplacian matrix $M^\star$
whose associated graph has exactly the prescribed four connected components.

In this example, the computed Frobenius distance is
\[
\|M^\star-L\|_F = 6.1052091693.
\]
The optimized graph has exactly four connected components, and the prescribed
partition is verified exactly. Moreover, the first four eigenvalues of
$M^\star$ are numerically zero, while
\[
\lambda_5(M^\star)=1.8229387728>0.
\]
Hence,
\[
\dim\ker(M^\star)=4,
\]
which confirms that the optimized graph has exactly the prescribed four
connected components.

The algebraic connectivity values of the non-singleton prescribed blocks are
given by
\[
\lambda_2(M^\star_1)=1.8229,\qquad
\lambda_2(M^\star_2)=3.9733,\qquad
\lambda_2(M^\star_3)=3.4578.
\]
Thus, each non-singleton prescribed block is internally connected, while the
singleton block $C_4=\{8\}$ is connected by convention.

Figure~\ref{fig:example1} shows the original graph and the optimized nearest
graph with exactly the prescribed component structure.

\begin{figure}[H]
	\centering
	\includegraphics[width=\textwidth]{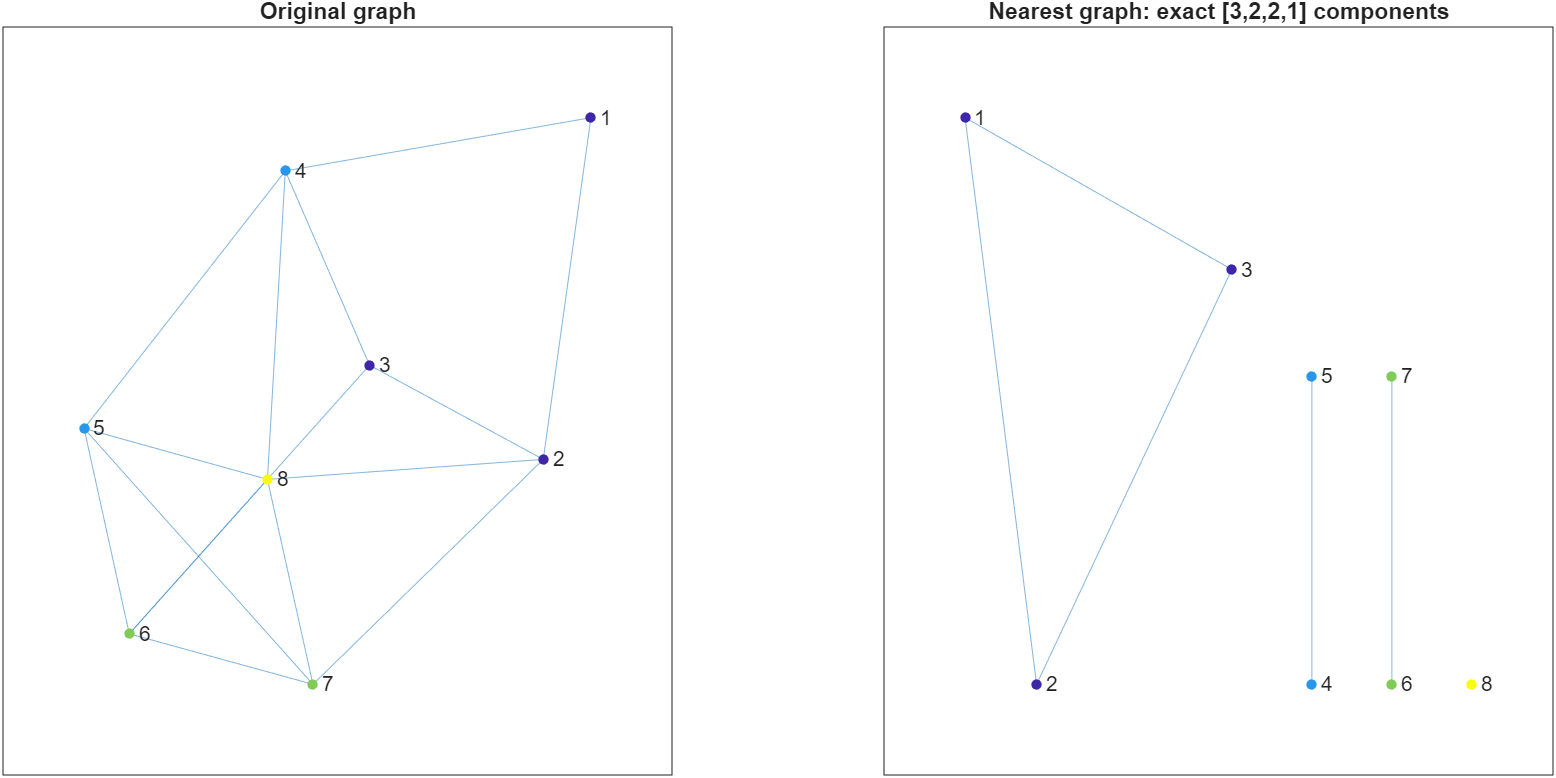}
	\caption{Original graph and optimized nearest graph with exactly prescribed component sizes $[3,2,2,1]$.}
	\label{fig:example1}
\end{figure}

For completeness, we also report the main numerical diagnostics. The symmetry
residual is zero to machine precision, the row-sum residual is
\[
3.253\times 10^{-12},
\]
the nullspace residual is
\[
\|M^\star U\|_F = 3.325\times 10^{-12},
\]
and the positive off-diagonal violation is zero. These values confirm the
numerical validity of the computed solution.

\begin{table}[H]
	\centering
	\caption{Numerical results for Example 2.}
	\label{tab:example1}
	\begin{tabular}{lc}
		\hline
		Quantity & Value \\
		\hline
		$\|M^\star-L\|_F$ & $6.1052091693$ \\
		Number of connected components & $4$ \\
		Exact prescribed partition verified & $1$ \\
		$\lambda_5(M^\star)$ & $1.8229387728$ \\
		$\lambda_2(M^\star_1)$ & $1.8229$ \\
		$\lambda_2(M^\star_2)$ & $3.9733$ \\
		$\lambda_2(M^\star_3)$ & $3.4578$ \\
		$\|M^\star U\|_F$ & $3.325\times 10^{-12}$ \\
		Row-sum residual & $3.253\times 10^{-12}$ \\
		Positive off-diagonal violation & $0$ \\
		\hline
	\end{tabular}
\end{table}

\section{Relation to Graph Partitioning and Computational Implications}

Classical graph partitioning problems, such as minimum cut, balanced
$k$-partition, and normalized cut, often involve optimizing over unknown
vertex assignments. Such formulations contain discrete or combinatorial
constraints, and many of these problems are NP-hard.

The problem considered in this paper is different. Here, the desired partition
\[
C_1,\ldots,C_k
\]
is prescribed in advance. We do not optimize over the assignment of vertices to
components. Instead, we optimize over the continuous entries of a Laplacian
matrix $M$ while enforcing the prescribed component structure through
nullspace and block-connectivity constraints.

The strengthened nearest-Laplacian problem is
\[
\begin{aligned}
	\min_{M\in\mathbb R^{n\times n}}\quad
	&\|M-L\|_F^2\\
	\text{subject to}\quad
	&MU=0,\\
	&M=M^T,\\
	&M_{ij}\le0,\qquad i\ne j,\\
	&M_j\succeq\varepsilon P_j,\qquad j=1,\ldots,k.
\end{aligned}
\]
The objective function is a strictly convex quadratic function of $M$. The
constraints $MU=0$ and $M=M^T$ are affine, the inequalities $M_{ij}\le0$ are
linear, and the constraints
\[
M_j-\varepsilon P_j\succeq0
\]
are linear matrix inequalities. Therefore, the problem is a convex
semidefinite optimization problem.

Thus, the proposed formulation avoids the combinatorial search present in
classical graph partitioning methods. Standard convex optimization solvers can
compute the unique global minimizer to numerical precision.
	
	\section{Conclusion}
	
	In this paper, we studied the problem of constructing the nearest graph
	Laplacian matrix with a prescribed connected-component structure. Given an
	original Laplacian matrix $L$ and a prescribed partition
	\[
	V=C_1\cup C_2\cup\cdots\cup C_k,
	\qquad
	C_i\cap C_j=\emptyset\quad (i\ne j),
	\]
	we formulated a convex optimization problem for finding a Laplacian matrix
	$M^\star$ nearest to $L$ in the Frobenius norm.
	
	The prescribed component structure is encoded through the block-indicator
	matrix
	\[
	U=[u_1,u_2,\ldots,u_k].
	\]
	The constraint
	\[
	MU=0
	\]
	ensures that the prescribed block-indicator vectors belong to the nullspace of
	the optimized Laplacian. This guarantees that the resulting graph has at least
	$k$ connected components. To ensure exactly the prescribed components, we
	introduced additional block-connectivity constraints
	\[
	M_j\succeq \varepsilon P_j,\qquad j=1,\ldots,k,
	\]
	where $M_j$ is the principal block corresponding to $C_j$ and $P_j$ is the
	orthogonal projector onto $\mathbf 1_{n_j}^{\perp}$.
	
	We proved that the strengthened formulation is a convex semidefinite
	optimization problem with a unique global minimizer. Moreover, the optimized
	Laplacian $M^\star$ has exactly the prescribed connected components
	\[
	C_1,\ldots,C_k,
	\]
	and its nullspace is given by
	\[
	\ker(M^\star)=\operatorname{span}\{u_1,u_2,\ldots,u_k\}.
	\]
	For numerical implementation, we used the equivalent reduced constraint
	\[
	Q_j^T M_j Q_j\succeq \varepsilon I_{n_j-1},
	\]
	which enforces positive definiteness on the subspace orthogonal to
	$\mathbf 1_{n_j}$ and avoids the unavoidable zero direction present in the
	projector formulation.
	
	The numerical examples illustrate that the proposed method successfully
	constructs the nearest Laplacian with exactly the desired component structure,
	including both consecutive and nonconsecutive prescribed components. 

\section*{Appendix}

\textbf{{Optimized Laplacian Matrix for the Faction-Based Real \texorpdfstring{$18\times18$}{18x18} Example}}
\label{app:real18_faction_matrix}

For the faction-based real $18\times18$ example, the prescribed partition is
\[
C_1=\{1,2,7,12,14,15,16\},
\qquad
C_2=\{4,5,6,9,11\},
\]
\[
C_3=\{3,17,18\},
\qquad
C_4=\{8,10,13\}.
\]
After ordering the vertices according to
\[
C_1,\ C_2,\ C_3,\ C_4,
\]
the optimized Laplacian matrix $M^\star$ has the block diagonal form
\[
M^\star=
\begin{bmatrix}
M^\star_1 & 0 & 0 & 0\\
0 & M^\star_2 & 0 & 0\\
0 & 0 & M^\star_3 & 0\\
0 & 0 & 0 & M^\star_4
\end{bmatrix}.
\]
The blocks of $M^\star$ are given below. The entries are rounded to four
decimal places.

\[
M^\star_1=
\begin{bmatrix}
 3.8571 & -1.4286 & -0.4286 & -0.4286 & -0.1429 & -1.1429 & -0.2857\\
-1.4286 &  5.2857 & -0.7143 & -0.7143 & -0.4286 & -1.4286 & -0.5714\\
-0.4286 & -0.7143 &  3.2857 & -0.7143 & -0.4286 & -0.4286 & -0.5714\\
-0.4286 & -0.7143 & -0.7143 &  4.2857 & -1.4286 & -0.4286 & -0.5714\\
-0.1429 & -0.4286 & -0.4286 & -1.4286 &  2.8571 & -0.1429 & -0.2857\\
-1.1429 & -1.4286 & -0.4286 & -0.4286 & -0.1429 &  4.8571 & -1.2857\\
-0.2857 & -0.5714 & -0.5714 & -0.5714 & -0.2857 & -1.2857 &  3.5714
\end{bmatrix}.
\]

\[
M^\star_2=
\begin{bmatrix}
 3.3200 & -1.6800 & -0.4800 & -0.6800 & -0.4800\\
-1.6800 &  4.3200 & -1.4800 & -0.6800 & -0.4800\\
-0.4800 & -1.4800 &  3.7200 & -0.4800 & -1.2800\\
-0.6800 & -0.6800 & -0.4800 &  3.3200 & -1.4800\\
-0.4800 & -0.4800 & -1.2800 & -1.4800 &  3.7200
\end{bmatrix}.
\]

\[
M^\star_3=
\begin{bmatrix}
 2.7778 & -1.2222 & -1.5556\\
-1.2222 &  2.7778 & -1.5556\\
-1.5556 & -1.5556 &  3.1111
\end{bmatrix}.
\]

\[
M^\star_4=
\begin{bmatrix}
 4.6667 & -2.6667 & -2.0000\\
-2.6667 &  5.0000 & -2.3333\\
-2.0000 & -2.3333 &  4.3333
\end{bmatrix}.
\]

The zero off-diagonal blocks show that the optimized graph has no edges between
distinct prescribed factions. Each diagonal block is a Laplacian block with
zero row sum and nonpositive off-diagonal entries.

	\section*{AI-Assistance Disclosure}
	
	The authors used an artificial intelligence language model for rephrasing,
	language polishing, and basic grammatical correction of certain sentences. All
	mathematical results, algorithms, proofs, and scientific conclusions presented
	in this manuscript are entirely authored, verified, and approved by the authors.
	
	\section*{Declaration of Competing Interest}
	
	The authors declare that they have no known competing financial interests or
	personal relationships that could have appeared to influence the work reported
	in this paper.
	



\end{document}